\DeclareMathOperator{\Hom}{Hom}
\DeclareMathOperator{\Ext}{Ext}
\DeclareMathOperator{\End}{End}
\newcommand{\C}{\mathbb{C}}
\newtheorem{theorem}{Theorem}
\newtheorem{corollary}[theorem]{Corollary}
\newtheorem{lemma}[theorem]{Lemma}
\newtheorem{proposition}[theorem]{Proposition}
\theoremstyle{definition}
\newtheorem{remark}[theorem]{Remark}
\title{The Mukai conjecture for Fano quiver moduli}
\author{Markus Reineke}
\address{ 
Markus Reineke\\ Ruhr-University Bochum\\ Faculty of Mathematics\\
Universit\"ats\-strasse 150\\
44780 Bochum (Germany)}
\email{Markus.Reineke@ruhr-uni-bochum.de}
\begin{document}

\begin{abstract} We verify the Mukai conjecture for Fano quiver moduli spaces associated to dimension vectors in the interior of the fundamental domain.
\end{abstract}
\maketitle
\parindent0pt

For a Fano variety $X$, that is, a smooth projective algebraic variety with ample anticanonical bundle, a famous conjecture of Mukai \cite{Muk} asserts that the inequality $$\dim X\geq r_X\cdot(i_X-1),$$ where $r_X$ denotes the Picard rank of $X$ and $i_X$ the index of $X$ (the maximal integer dividing the anticanonical class) holds, with equality only for powers of projective space.

In \cite{FRS}, the moduli spaces $M_{\bf d}^{\rm st}(Q)$ of stable quiver representations \cite{RModuli} which are Fano varieties are identified, forming a rather special  (for example, always being rational \cite{SchB} and of algebraic cohomology \cite{KW}), but arbitrarily high-dimensional, class of Fano varieties.

In this note, we verify the Mukai conjecture for Fano quiver moduli spaces (in the spirit of such a verification for toric varieties \cite{Ca06}, horospherical varieties \cite{Pas10} and symmetric varieties \cite{GH}), under a reasonable genericity assumption on the dimension vector ${\bf d}$, to be discussed below.\\[1ex]
So let $Q$ be a connected finite acyclic quiver with set of vertices $Q_0$ and arrows written $\alpha:i\rightarrow j$ for $i,j\in Q_0$, and let $$\langle{\bf d},{\bf e}\rangle=\sum_{i\in Q_0}d_ie_i-\sum_{\alpha:i\rightarrow j}d_ie_j,$$ for ${\bf d},{\bf e}\in\mathbb{Z}Q_0$, its Euler form; we also consider its symmetrization $({\bf d},{\bf e})$ as well as its antisymmetrization $\{{\bf d},{\bf e}\}$. For a dimension vector ${\bf d}=\sum_id_i{\bf i}\in\mathbb{Z}_{>0}Q_0$, the linear form $\{{\bf d},\_\}$ defines a stability in the sense of \cite{King}, for which we assume in the following that there exists a stable representation of dimension vector ${\bf d}$, that ${\bf d}$ is coprime (that is, $\{{\bf d},{\bf e}\}\not=0$ for all $0\not={\bf e}\lneqq{\bf d}$), and that ${\bf d}$ is amply stable (there are no divisors in the unstable locus of the representation space, see \cite{FRS}). The main result of \cite{FRS} then asserts:
 
 \begin{theorem}[\cite{FRS}]\label{frs}The moduli space $M_{\bf d}^{\rm st}(Q)$ parametrizing isomorphism classes of $\{{\bf d},\_\}$-stable complex representations of $Q$ of dimension vector ${\bf d}$ is a complex Fano variety of dimension $1-\langle{\bf d},{\bf d}\rangle$, Picard rank $|Q_0|-1$, and index $\gcd(\{{\bf d},{\bf i}\}:i\in Q_0)$.
 \end{theorem}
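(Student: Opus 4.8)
The plan is to realise $M_{\mathbf{d}}^{\rm st}(Q)$ as a geometric GIT quotient and to read off all three invariants from that presentation. Put $R=R_{\mathbf{d}}(Q)=\bigoplus_{\alpha\colon i\to j}\operatorname{Hom}(\C^{d_i},\C^{d_j})$, let $G=\prod_{i\in Q_0}GL_{d_i}(\C)$ act on $R$ by base change (so the scalar subgroup $\C^\ast\subseteq G$ acts trivially), and let $\Theta=\{\mathbf{d},\_\}$ be the King stability, so that $M_{\mathbf{d}}^{\rm st}(Q)$ is the GIT quotient of the $\Theta$-semistable locus $R^{\rm ss}$. Coprimality of $\mathbf{d}$ forces $R^{\rm ss}=R^{\rm st}$ (a properly semistable representation would have a subrepresentation of dimension vector $0\neq\mathbf{e}\lneqq\mathbf{d}$ with $\{\mathbf{d},\mathbf{e}\}=0$), so this locus is open and $G$-saturated, its quotient map $\pi\colon R^{\rm st}\to M_{\mathbf{d}}^{\rm st}(Q)$ is a geometric quotient onto a projective variety, and it is non-empty, hence irreducible, by hypothesis. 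Moreover a $\Theta$-stable representation $M$ is simple, so $\operatorname{End}(M)=\C$ and its $G$-stabiliser is exactly the scalars; thus $PG:=G/\C^\ast$ acts freely on $R^{\rm st}$, $\pi$ is a principal $PG$-bundle, and $M_{\mathbf{d}}^{\rm st}(Q)$ is smooth of dimension $\dim R-\dim PG=\sum_{\alpha\colon i\to j}d_id_j-(\sum_{i}d_i^2-1)=1-\langle\mathbf{d},\mathbf{d}\rangle$ (equivalently, the tangent space at $[M]$ is $\operatorname{Ext}^1(M,M)$, of dimension $\dim\operatorname{Hom}(M,M)-\langle\mathbf{d},\mathbf{d}\rangle=1-\langle\mathbf{d},\mathbf{d}\rangle$ as $Q$ is acyclic).

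For the Picard group I would use descent along the torsor $\pi$, which identifies $\operatorname{Pic}(M_{\mathbf{d}}^{\rm st}(Q))$ with $\operatorname{Pic}^{PG}(R^{\rm st})$, together with the residual $\C^\ast$-gerbe $[R^{\rm st}/G]\to[R^{\rm st}/PG]=M_{\mathbf{d}}^{\rm st}(Q)$, which embeds $\operatorname{Pic}^{PG}(R^{\rm st})$ into $\operatorname{Pic}^{G}(R^{\rm st})$. This is where ample stability enters: the unstable locus in $R$ — which by coprimality coincides with the non-stable locus — contains no divisors, so, $R$ being an affine space and $G$ connected, $\operatorname{Pic}^{G}(R^{\rm st})=\operatorname{Pic}^{G}(R)=\widehat{G}\cong\mathbb{Z}^{Q_0}$, a character $\chi=(\chi_i)_i$ corresponding to $g\mapsto\prod_i\det(g_i)^{\chi_i}$. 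Such a $\chi$ defines a $PG$-equivariant bundle on $R^{\rm st}$ exactly when the scalars act trivially on its fibres, i.e.\ when $\sum_i\chi_id_i=0$; hence $\operatorname{Pic}(M_{\mathbf{d}}^{\rm st}(Q))\cong\{\chi\in\mathbb{Z}^{Q_0}:\sum_i\chi_id_i=0\}$, a lattice of rank $|Q_0|-1$.

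Finally I would compute the canonical class. Since $R$ is an affine space, $\det TR$ is the trivial bundle with $G$-linearisation the character of $G$ on $\bigwedge^{\rm top}R=\bigotimes_{\alpha\colon i\to j}\det\operatorname{Hom}(\C^{d_i},\C^{d_j})$, whose $i$-th component equals $\sum_{\alpha\colon j\to i}d_j-\sum_{\alpha\colon i\to j}d_j=-\{\mathbf{d},\mathbf{i}\}$; the relative canonical bundle of the principal $PG$-bundle $\pi$ is $G$-equivariantly trivial (its fibre is $\det\operatorname{Lie}(PG)^\ast$ with the adjoint action, which is trivial since $PG$ is reductive), so $-K_{M_{\mathbf{d}}^{\rm st}(Q)}$ descends from the character $\kappa=(-\{\mathbf{d},\mathbf{i}\})_i$ — note $\sum_i\kappa_id_i=-\{\mathbf{d},\mathbf{d}\}=0$, consistently with the previous paragraph. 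Up to the sign convention of King's construction, $\kappa$ is the stability $\Theta$ itself, so the class it descends to is the ample generator supplied by the $\operatorname{Proj}$ presentation of $M_{\mathbf{d}}^{\rm st}(Q)$; hence $M_{\mathbf{d}}^{\rm st}(Q)$ is Fano. Under the identification of the previous paragraph $-K$ corresponds to $\kappa$ inside the saturated sublattice $\{\chi:\sum_i\chi_id_i=0\}$ of $\mathbb{Z}^{Q_0}$, and since divisibility in a saturated sublattice may be tested in the ambient lattice, the index is $\gcd(\{\mathbf{d},\mathbf{i}\}:i\in Q_0)$. The step I expect to be the main obstacle is the Picard computation — making the descent argument precise enough to see that ample stability pins down $\operatorname{Pic}(M_{\mathbf{d}}^{\rm st}(Q))$ as exactly the rank-$(|Q_0|-1)$ lattice above (and not merely a sublattice or an extension of it) — together with the bookkeeping of signs that makes the descended anticanonical class ample rather than anti-ample.
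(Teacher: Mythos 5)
Your proposal is correct and follows essentially the same route as the proof in \cite{FRS} (which this paper only cites): realize the moduli space as a free $PG_{\bf d}$-quotient of the stable locus, use ample stability and coprimality to identify $\operatorname{Pic}$ with the rank-$(|Q_0|-1)$ lattice $\{\chi:\sum_i\chi_id_i=0\}$ by equivariant descent, and identify $-K$ with the class descending from the canonical stability $\{{\bf d},\_\}$, whose ampleness comes from the GIT presentation. The only cosmetic difference is that you read off $c_1(T)$ from the $G_{\bf d}$-character on $\bigwedge^{\rm top}R_{\bf d}(Q)$, whereas \cite{FRS} obtains the same character via the tautological four-term exact sequence $0\to\mathcal{O}\to\bigoplus_i\mathcal{V}_i^*\otimes\mathcal{V}_i\to\bigoplus_{\alpha:i\to j}\mathcal{V}_i^*\otimes\mathcal{V}_j\to\mathcal{T}\to 0$; these are the same computation.
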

 
 That ${\bf d}$ admits a stable representation can, in general, only be verified using a highly recursive criterion \cite{Scho}, except if ${\bf d}$ belongs to the so-called fundamental domain \cite{Kac}, that is, $({\bf d},{\bf i})\leq 0$ for all $i\in Q_0$. We have to slightly strengthen this reasonable assumption to obtain our main result:
 
 \begin{theorem}\label{main} In addition to the previous assumptions, assume that $({\bf d},{\bf i})\leq -2$ for all $i\in Q_0$. Then $M_{\bf d}^{\rm st}(Q)$ fulfills the Mukai conjecture. \end{theorem}
 
 We will reduce the statement to the following elementary counting argument:

\begin{lemma} Let $a_1,\ldots,a_k,b_1,\ldots,b_l$ be positive integers. Assume that no proper partial sums coincide, that is, if $\sum_{i\in I}a_i=\sum_{j\in J}b_j$ for subsets $I\subset\{1,\ldots,k\}$ and $J\subset\{1,\ldots,l\}$, then $I=J=\emptyset$ or $I=\{1,\ldots,k\}$, $J=\{1,\ldots,l\}$. Then $$\sum_ia_i+\sum_jb_j\geq 2(k+l-1),$$ with equality only if $l=1$ and all $a_i$ equal to one, or $k=1$ and all $b_j$ equal to one.
\end{lemma}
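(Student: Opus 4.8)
The plan is to prove the Lemma by induction on $k+l$. Put $A=\sum_i a_i$ and $B=\sum_j b_j$. The whole statement---hypothesis, inequality, and equality clause---is symmetric under interchanging the $a_i$ (together with $k$) and the $b_j$ (together with $l$). So, after disposing of the base case $k=l=1$, where the hypothesis holds automatically and $a_1+b_1\geq 2$ with equality precisely when $a_1=b_1=1$ (which is of the asserted form), I may assume $k\geq 2$ in the inductive step.

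Let $k\geq 2$. If some part $a_i$ is at least $2$, say $a_k\geq 2$, then the smaller family $a_1,\dots,a_{k-1},b_1,\dots,b_l$ still satisfies the hypothesis: if $\sum_{i\in I}a_i=\sum_{j\in J}b_j$ with $I\subseteq\{1,\dots,k-1\}$ and $J\subseteq\{1,\dots,l\}$, then $I$ is a proper subset of $\{1,\dots,k\}$, so the original hypothesis forces $I=J=\emptyset$. By the inductive hypothesis $\sum_{i<k}a_i+B\geq 2\bigl((k-1)+l-1\bigr)$, and therefore
\[
A+B\;\geq\;2(k+l-1)-2+a_k\;\geq\;2(k+l-1).
\]
If instead every $a_i$ equals $1$, I use the hypothesis directly: for every $J$ with $\emptyset\neq J\subsetneq\{1,\dots,l\}$ the sum $m=\sum_{j\in J}b_j$ cannot be $\leq k$, for otherwise an $I\subseteq\{1,\dots,k\}$ with $|I|=m$ gives $\sum_{i\in I}a_i=m=\sum_{j\in J}b_j$ with $(I,J)$ neither $(\emptyset,\emptyset)$ nor $(\{1,\dots,k\},\{1,\dots,l\})$, contradicting the hypothesis. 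Hence, if $l\geq 2$, taking $J$ to be each singleton gives $b_j\geq k+1$ for all $j$, so
\[
A+B\;\geq\;k+l(k+1)\;=\;2(k+l-1)+(k-1)(l-1)+1\;>\;2(k+l-1),
\]
whereas if $l=1$ the analogous argument---now $\{1\}=\{1,\dots,l\}$ is the full index set, so $(I,\{1\})$ is still nontrivial as soon as $1\leq|I|=b_1<k$---forces $b_1\geq k$ and $A+B=k+b_1\geq 2k=2(k+l-1)$. This proves the inequality.

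For the equality clause, trace the two cases backwards. In the all-ones case with $k\geq 2$, the computation above shows equality can only occur when $l=1$ and $b_1=k$, which is ``$l=1$ and all $a_i$ equal to one''. In the remaining case $a_k\geq 2$, equality forces $a_k=2$ and equality for the reduced family, and the inductive equality clause then says the reduced family is all-ones on the $a$-side with $l=1$, or all-ones on the $b$-side with $k-1=1$; in either situation one solves for the single remaining part and exhibits a proper subset $I$ of $\{1,\dots,k\}$ with $\sum_{i\in I}a_i=B$ (there is enough room because $k\geq 2$), so that $(I,\{1,\dots,l\})$ is a nontrivial coincidence, contradicting the hypothesis. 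Hence equality does not occur when some $a_i\geq 2$, and undoing the symmetrization from the first paragraph gives exactly the stated characterization. The inequality is easy once one notices the reduction ``delete a part that is $\geq 2$''; the genuinely delicate point is the equality analysis, where one must verify that the configurations produced by the inductive hypothesis are incompatible with the no-coincidence assumption.
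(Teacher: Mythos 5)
Your proof is correct, but it takes a genuinely different route from the paper's. The paper argues in one pigeonhole step: it forms the $(k+1)(l+1)$ numbers $d_{i,j}=a_1+\dots+a_i-b_1-\dots-b_j$ for $0\le i\le k$, $0\le j\le l$, observes that the no-coincidence hypothesis forces these to be pairwise distinct except possibly for $d_{0,0}=d_{k,l}$, and notes that they all lie in the interval $[-\sum_j b_j,\sum_i a_i]$; this yields the stronger bound $\sum_i a_i+\sum_j b_j\ge (k+1)(l+1)-2=(k-1)(l-1)+2(k+l-1)$, from which the equality analysis drops out almost for free ($k=1$ or $l=1$ from the term $(k-1)(l-1)$, and $\sum_i a_i=\sum_j b_j$ from $d_{0,0}=d_{k,l}$). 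Your induction on $k+l$ --- peel off an $a$-part of size at least $2$, or, when all $a_i=1$, use the hypothesis to force $b_j\ge k+1$ for $l\ge 2$ (resp.\ $b_1\ge k$ for $l=1$) --- is more elementary and avoids the one clever auxiliary construction, at the cost of a longer equality analysis: you must check that the equality configurations returned by the inductive hypothesis always produce a forbidden coincidence of partial sums, which you do correctly ($b_1=k-1$ is hit by $I=\{1,\dots,k-1\}$ in one sub-case, $a_1=l$ is hit by $I=\{1\}$ in the other). In short, the paper's argument is shorter and quantitatively sharper; yours is a self-contained induction that trades the key observation for more bookkeeping, all of which checks out.
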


\begin{proof} Consider the numbers $$d_{i,j}=a_1+\ldots+a_i-b_1-\ldots-b_j$$ for $0\leq i\leq k$, $0\leq j\leq l$. Their mutual differences are sums or differences of partial sums of the $a_i$ and the $b_j$, thus they are either manifestly positive or negative, or non-zero by the assumption of the lemma, except possibly for $d_{0,0}$, $d_{k,l}$. So the $d_{i,j}$ form at least $(k+1)(l+1)-1$ mutually different integers in the interval $[-\sum_jb_j,\sum_ia_i]$, and thus $$\sum_ia_i+\sum_jb_j\geq(k+1)(l+1)-2=(k-1)(l-1)+2(k+l-1)\geq 2(k+l-1).$$ Moreover, if equality holds, then $k=1$ or $l=1$ by the second inequality, and the first inequality forces $d_{0,0}=d_{k,l}$, which reads $\sum_ia_i=\sum_jb_j$. This in turn implies (assuming $l=1$ without loss of generality) $\sum_ia_i=k+l-1=k$, and thus all $a_i=1$.\end{proof}

In light of Theorem \ref{frs}, the claim of Theorem \ref{main} reduces to the following statement, since, by \cite[Section 5.3]{FRS}, for thin dimension vectors of $m$-thickened subspace quivers the moduli spaces realize powers of projective space:

\begin{proposition} Assume that  ${\bf d}$ is $\{{\bf d},\_\}$-coprime and that $({\bf d},{\bf i})\leq -2$ for all $i\in Q_0$. Then
$$1-\langle{\bf d},{\bf d}\rangle\geq(|Q_0|-1)\cdot (\gcd(\{{\bf d},{\bf i}\} : i\in Q_0)-1),$$
with equality only if $Q$ is an $m$-thickened subspace quiver or its opposite, and ${\bf d}$ is thin.
\end{proposition}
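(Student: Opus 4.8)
The plan is to bound the dimension $1-\langle\bd,\bd\rangle$ from below vertex by vertex and then feed the estimate into the Lemma. Write $c_i=\{\bd,\bi\}$, $g=\gcd(c_i:i\in Q_0)$ and $n=|Q_0|$. The hypothesis $(\bd,\bi)\leq-2$ forces $n\geq2$ (an isolated vertex would have $(\bd,\bi)=2d_i>0$), and since $\bi\lneqq\bd$ for every $i$, $\{\bd,\_\}$-coprimeness gives $c_i\neq0$ for all $i$. As $\sum_id_ic_i=\{\bd,\bd\}=0$ with all $d_i>0$, the $c_i$ cannot all share a sign, so $Q_0=P\sqcup N$ with $c_i>0$ on $P$ and $c_i<0$ on $N$, both nonempty.

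For the dimension, write $m_i^+=d_i-\langle\bi,\bd\rangle\geq0$ and $m_i^-=d_i-\langle\bd,\bi\rangle\geq0$ for the $\bd$-weighted out- and in-degree at $i$, so $c_i=m_i^+-m_i^-$ and $(\bd,\bi)=2d_i-m_i^+-m_i^-\leq2d_i-|c_i|$; combined with $(\bd,\bi)\leq-2$ this gives $(\bd,\bi)\leq-\max(2,\,|c_i|-2d_i)$ for all $i$. Using $\{\bd,\bd\}=0$ one has $1-\langle\bd,\bd\rangle=1-\tfrac12(\bd,\bd)=1-\tfrac12\sum_id_i(\bd,\bi)$, hence
\[ 1-\langle\bd,\bd\rangle\ \geq\ 1+\tfrac12\sum_id_i\max(2,\,|c_i|-2d_i). \]
An elementary check gives $d_i\max(2,\,|c_i|-2d_i)\geq|c_i|-2$ for each $i$: when $|c_i|-2d_i\leq2$ this is $2d_i\geq|c_i|-2$, and when $|c_i|-2d_i\geq2$ it is the identity $d_i(|c_i|-2d_i)-(|c_i|-2)=(d_i-1)(|c_i|-2d_i-2)\geq0$. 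Summing, $1-\langle\bd,\bd\rangle\geq1+\tfrac12\sum_i|c_i|-n$.

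Now apply the Lemma to the positive integers $c_i/g$ for $i\in P$ and $|c_i|/g$ for $i\in N$; its hypothesis is exactly $\{\bd,\_\}$-coprimeness, since if $\sum_{i\in I}c_i=\sum_{j\in J}|c_j|$ for $I\subseteq P$ and $J\subseteq N$, then the $\{0,1\}$-vector $\be$ supported on $I\cup J$ satisfies $\{\bd,\be\}=0$, hence $\be=0$ (so $I=J=\emptyset$) or $\be=\bd$ (only possible if $\bd$ is thin, and then $I=P$, $J=N$). The Lemma gives $\sum_i|c_i|/g\geq2(n-1)$, i.e.\ $\sum_i|c_i|\geq2g(n-1)$, and substituting,
\[ 1-\langle\bd,\bd\rangle\ \geq\ 1+g(n-1)-n\ =\ (|Q_0|-1)(g-1), \]
which is the asserted inequality.

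The equality analysis is where the real work lies. Equality above forces equality in the Lemma, so---replacing $Q$ by its opposite if needed---$|N|=1$, say $N=\{i_0\}$, with $c_i=g$ for all $i\in P$ and $c_{i_0}=-g(n-1)$. Equality in $d_i\max(2,|c_i|-2d_i)\geq|c_i|-2$ forces, at every vertex, either $|c_i|=2d_i+2$ or ($d_i=1$ and $|c_i|\geq4$); in particular $g=|c_i|\geq4$ for $i\in P$. Consequently the equality conditions also force $m_i^+=0$ or $m_i^-=0$ at every vertex, so (using $c_i\neq0$) each vertex of $P$ is a source and $i_0$ is a sink, with all arrows running into $i_0$. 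Then $m_i^+=g$ says each $i\in P$ has $g/d_{i_0}$ arrows to $i_0$, whence $d_{i_0}\mid g$, and $m_{i_0}^-=g(n-1)$ says $\sum_{i\in P}d_i=(n-1)d_{i_0}$. Writing $g=d_{i_0}q$ and combining with the equality condition $|c_{i_0}|=g(n-1)$ (and $g\geq4$) leaves only $d_{i_0}=1$: the case $d_{i_0}\geq2$ yields either $g<4$ or $n=2$, $\bd=(2,2)$, and the latter violates coprimeness via $\be=(1,1)$. With $d_{i_0}=1$, $\sum_{i\in P}d_i=n-1=|P|$ forces $\bd$ thin, and $Q$ is the $g$-thickened subspace quiver on $n$ vertices (or, in the other branch, its opposite). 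The main obstacle is exactly this final bookkeeping---deciding which of the three inequalities in the chain must be tight and ruling out the spurious near-extremal configuration using coprimeness.
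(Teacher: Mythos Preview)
Your proof is correct and follows essentially the same route as the paper: introduce the weighted in/out degrees $m_i^\pm$ (the paper's $\alpha_i,\beta_i$), bound $1-\langle\bd,\bd\rangle$ from below by $\tfrac12\sum_i|c_i|-(n-1)$, feed the $|c_i|/g$ into the Lemma via $\{0,1\}$-vectors and coprimality, and then run the equality analysis. Your packaging of the two vertexwise inequalities into the single estimate $d_i\max(2,|c_i|-2d_i)\geq|c_i|-2$ is a cosmetic rearrangement of the paper's pair $((\alpha_i+\beta_i)/2-d_i)d_i\geq(\alpha_i+\beta_i)/2-1$ and $\alpha_i+\beta_i\geq|\alpha_i-\beta_i|$, and your endgame (ruling out $d_{i_0}\geq 2$ via $g\geq 4$ and the coprimality obstruction at $\bd=(2,2)$) matches the paper's ``quick estimate'' in slightly different bookkeeping.
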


\begin{proof}  For all $i\in Q_0$, define nonnegative integers $$\alpha_i=d_i-\langle{\bf i},{\bf d}\rangle,\;\beta_i=d_i-\langle{\bf d},{\bf i}\rangle,$$ so that, by assumption, $$-2\geq({\bf d},{\bf i})=2d_i-\alpha_i-\beta_i\mbox{ and }\alpha_i-\beta_i=\{{\bf d},{\bf i}\}.$$ We can then estimate $1-\langle{\bf d},{\bf d}\rangle=$ $$=1+\sum_i((\alpha_i+\beta_i)/2-d_i)d_i\geq 1+\sum_i((\alpha_i+\beta_i)/2-1)=\sum_i(\alpha_i+\beta_i)/2-(|Q_0|-1).$$ The claim thus follows if $$\sum_i(\alpha_i+\beta_i)\geq 2(|Q_0|-1)\cdot m$$ for $m=\gcd(\{{\bf d},{\bf i}\} : i\in Q_0)$. We can thus write $$\alpha_i-\beta_i=\varepsilon_i\gamma_i\cdot m$$ for a sign $\varepsilon_i$ and nonnegative integral $\gamma_i$. The estimate $$\alpha_i+\beta_i\geq |\alpha_i-\beta_i|$$ then reduces the claim to $$\sum_i\gamma_i\geq 2(|Q_0|-1).$$ The coprimality assumption reads $$0\not=\sum_i\varepsilon_i\gamma_ie_i$$ for all $0\not={\bf e}\lneqq{\bf d}$. In particular, $\gamma_i$ is positive for all $i\in Q_0$. Denoting the $\gamma_i$ for positive $\varepsilon_i$ by 
$a_1,\ldots, a_k$, and those for negative $\varepsilon_i$ by $b_1,\ldots,b_l$, coprimality implies that the assumptions of the previous lemma are satisfied, and the first claim there yields the desired estimate. 
Now assume that equality holds in all estimates. By the lemma we thus find (without loss of generality) a unique vertex $i_0\in Q_0$ such that $$\alpha_i-\beta_i=m\mbox{ for all }i\not=i_0\mbox{, whereas }\alpha_{i_0}-\beta_{i_0}=(1-|Q_0|)\cdot m.$$ The equalities $\alpha_i+\beta_i=|\alpha_i-\beta_i|$ then imply $\beta_i=0$, and thus $\alpha_i=m$, for all $i\not=i_0$, and $\alpha_{i_0}=0$, and thus $\beta_{i_0}=(|Q_0|-1)\cdot m$. By definition of $\alpha_i,\beta_i$, we find that $Q$ is the $(m/d_{i_0})$-thickened $s$-subspace quiver, with unique sink $i_0$ and $s=|Q_0|-1$, and that
$$d_{i_0}=\sum_{i\not=i_0}d_i/s.$$ Furthermore, the first step in the chain of inequalities shows that $d_i=1$ or $d_i=m/2-1$ for all $i\not=i_0$, and that $d_{i_0}=1$ or $d_{i_0}=sm/2-1$. A quick estimate shows that the latter is impossible, thus ${\bf d}$ is thin.
\end{proof}

{\bf Acknowledgments:} The author would like to thank H.~Franzen and S.~Sabatini for helpful discussions on the topic. This work is supported by the DFG CRC-TRR 191 ``Symplectic structures in geometry, algebra and dynamics'' (281071066).

\end{document}